\newtheorem{theor}{Theorem}
\theoremstyle{definition}
\newtheorem{proposition}[theor]{Proposition}
\newtheorem{lemma}[theor]{Lemma}
\newtheorem{example}{Example}
\theoremstyle{remark}
\newtheorem{rem}{Remark}
\newcommand{\pinner}{\mathbin{\mathchoice
		{\hbox{\vrule width0.6em depth0pt height0.4pt
				\vrule width0.4pt depth0pt height0.8ex}}
		{\hbox{\vrule width0.6em depth0pt height0.4pt
				\vrule width0.4pt depth0pt height0.8ex}}
		{\hbox{\kern0.14em
				\vrule width0.48em depth0pt height0.4pt
				\vrule width0.4pt depth0pt height0.6ex\kern0.14em}}
		{\hbox{\kern0.1em
				\vrule width0.39em depth0pt height0.4pt
				\vrule width0.4pt depth0pt height0.5ex\kern0.1em}}}}
\newcommand{\cH}{\mathcal{H}}
\newcommand{\cP}{\mathcal{P}}
\newcommand{\cQ}{\mathcal{Q}}
\newcommand{\cX}{{\EuScript X}}    
\newcommand{\cY}{{\EuScript Y}}    
\newcommand{\cZ}{{\EuScript Z}}    
\newcommand{\lshad}{[\![}
\newcommand{\rshad}{]\!]}
\newcommand{\schouten}[1]{\lshad {#1} \rshad}
\newcommand{\by}[1]{\textit{{#1}}}
\newcommand{\jour}[1]{\textit{{#1}}}
\newcommand{\vol}[1]{\textbf{{#1}}}
\newcommand{\book}[1]{\textrm{{#1}}}
\title[The Kontsevich tetrahedral flow in dimension $n=2$]{The Kontsevich tetrahedral flow in 2D: a toy model}
\author{Anass Bouisaghouane}
\thanks{\textit{Address}: Johann Ber\-nou\-lli Institute for Mathematics and Computer Science, University of Groningen,
	P.O.~Box 407, 9700~AK Groningen, The Netherlands. 
	\quad${}^{*}$\textit{E-mail}: \texttt{a.bouisaghouane.1\symbol{"40}student.rug.nl}}
\date{\today}
\subjclass[2010]{35R01, 53D17, 70G45
	%
}
\keywords{Poisson bracket, deformation, tetrahedral flow, cohomology}
\begin{document}
\begin{abstract}
In the paper ``Formality conjecture'' (1996) Kontsevich designed a universal flow $\dot{\cP}=\cQ_{a:b}(\cP)=a\Gamma_{1}+b\Gamma_{2}$ on the spaces of Poisson structures $\cP$ on all affine manifolds of dimension $n \geqslant 2$. We prove a claim from {\it{loc.\,cit.\,}} stating that if $n=2$, the flow $\cQ_{1:0}=\Gamma_{1}(\cP)$ is Poisson-cohomology trivial: $\Gamma_{1}(\cP)=\schouten{\cP,\cX}$ for some vector field~$\cX$; we examine the structure of the space of solutions $\cX$. Both the construction of differential polynomials $\Gamma_{1}(\cP)$ and $\Gamma_{2}(\cP)$ and the technique to study them remain valid in higher dimensions $n \geqslant 3$, but neither the trivializing vector field $\cX$ nor the setting $b:=0$ survive at $n\geqslant 3$, where the balance is $a:b=1:6$.
\end{abstract}
\maketitle
\subsection*{Introduction}
\noindent
Let $\cP=(\cP^{ij})$ be a Poisson bi-vector (whose coefficient matrix is skew-symmetric and satisfies the Jacobi identity\footnote{The Schouten bracket $\schouten{\cdot,\cdot}$ is a unique extension of the commutator $[\cdot,\cdot]$ on the space of vector fields  $\cX^{1}(N^{n})$ to the space $\cX^{k}(N^{n})$ of polyvector fields. By definition, the Schouten bracket coincides with the Lie bracket when evaluated on $1$-vectors. When evaluated on p-vector $\cX$,  q-vector $\cY$ and r-vector $\cZ$, the Schouten bracket satisfies the equations $\schouten{\cX,\cY}=-(-1)^{(p-1)(q-1)}\schouten{\cY,\cX}$ and $\schouten{\cX,\cY\wedge\cZ}=\schouten{\cX,\cY}\wedge\cZ+(-1)^{(p-1)q}\cY\wedge\schouten{\cX,\cZ}$.} $\schouten{\cP,\cP}=0$) on a real affine $n$-dimensional manifold~$N^{n}$ and denote by $\textbf{x}=(x^{1},\ldots, x^{n})$ local coordinates. In \cite{Ascona96}, Kontsevich introduced two differential polynomials\footnote{The second differential polynomial $\Gamma_{2}^{im}$ is not skew-symmetric in $(i,m)$ so that one must skew-symmetrize in $(i,m)$: $\dot{\cP}_{2}^{im}(\omega,\eta)=\frac{1}{2}\left(\Gamma_{2}^{im}(\cP)-\Gamma_{2}^{mi}(\cP)\right)\partial_{x^{i}}(\omega)\wedge \partial_{x^{m}}(\eta)$, in order to construct a similar flow as the one obtained from $\Gamma_{1}$.
}
in the coefficients $\cP^{\alpha\beta}$ and their derivatives $\cP^{\alpha\beta}_{\sigma}\stackrel{\text{def}}{=} \partial ^{|\sigma|} / \partial x^{\sigma} (\cP^{\alpha\beta})$, where $\sigma=(\sigma_{1}\cdots\sigma_{k})$, $k \in \mathbb{N}$, denotes the indices of the coordinates with respect to which we differentiate, e.g. $\partial^{|(122)|}/\partial x^{(122)}=\partial^{3}/\partial x^{1}x^{2}x^{2}$:
\begin{align*}
\raisebox{0pt}[10pt][10pt]{$ 
\Gamma_{1}^{ij}(\cP) = \sum\limits_{k,\ell,m,k',\ell',m'=1}^{n} \cP^{ij}_{klm} \cP^{kk'}_{\ell'} \cP^{\ell\ell'}_{m'} \cP^{mm'}_{k'}, \quad
\Gamma_{2}^{im}(\cP) = \sum\limits_{j,k,\ell,k',\ell',m'=1}^{n} \cP^{ij}_{k\ell} \cP^{km}_{k'\ell'} \cP^{k'\ell}_{m'} \cP^{m'\ell'}_{j}.
$}
\end{align*}
%
From any initial bi-vector $\cP$, the coefficients of a new bi-vector are constructed using the differential polynomial $\Gamma_{1}$: $\dot{\cP}_{1}^{ij}(\omega,\eta)=\Gamma^{ij}_{1}(\cP)\partial_{x^{i}}(\omega)\wedge \partial_{x^{j}}(\eta)$ where the functions $\omega$, $\eta \in C^\infty(N^n)$. We thereby obtain a `flow' on the space of bi-vectors with infinitesimal deformation $\cP \mapsto \cP+\epsilon \Gamma_{1}(\cP)+\bar{o}(\epsilon),\ \epsilon \in \mathbb{R}$. 
%
From a Poisson bi-vector $\cP$, we construct the classical Poisson differential $\partial_{\cP}=\schouten{\cP,\cdot}$ 
and obtain the Poisson complex:
$$0 \to \Bbbk \xhookrightarrow{} C^\infty(N^n) \to \cX^{1}(N^{n}) \to \cX^{2}(N^{n}) \to \cdots \to \cX^{n}(N^{n}) \to 0. $$
Does either of the flows mark a $\partial_{\cP}$-\/trivial class in the Poisson cohomology? If $n=2$, every flow is $\partial_{\cP}$-\/closed because the Schouten bracket of the bi-vector $\cP$ with the bi-vector $\Gamma_{1}(\cP)$ or $\Gamma_{2}(\cP)$ is a tri-vector that sure vanishes on a two-dimensional affine manifold. 
The property we explore in this paper is the exactness, with respect to the Poisson differential, of the Kontsevich tetrahedral flows over 2-dimensional affine manifolds.
\vspace{-3mm}
\begin{center}
\rule{110mm}{1pt}
\end{center}

We first expand both the bi-vectors obtained from $\Gamma_{1}$ and skew-sym\-met\-rized $\Gamma_{2}$ with respect to the local coordinates $x^{1}=x,x^{2}=y$ 
of our 2-dimensional manifold. This means that we can expand the differential polynomials with their indices $i,j,k,\ell,m,k',\ell',m' \in \{1,2\}$. In dimension $n=2$, every bi-vector has only one unique component, $\cP^{12}$, which is denoted by $u$. 
\begin{proposition}
The only non-zero component of the bi-vector flow $\dot{\cP}_{1}$ is expressed in terms of $u,x$ and $y$ via
\begin{align}
\label{Flow2D}
\dot{\cP}_{1} &= u_{xxx}u_y^{3}-u_{yyy}u_x^{3}-3u_{xxy}u_{x}u_{y}^{2}+3u_{xyy}u_x^{2}u_{y}.
\end{align}
The bi-vector flow $\dot{\cP}_{2}$ vanishes identically (in dimension $n=2$ under study).
\end{proposition}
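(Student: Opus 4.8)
The proof is a direct expansion, and the plan is to organise it so that the forest of index sums collapses to a handful of terms. The key structural input I would record first is that in $n=2$ the array $\cP$ has essentially one entry: with $u:=\cP^{12}$ we have $\cP^{21}=-u$ and $\cP^{11}=\cP^{22}=0$, so, writing $\hat 1:=2$, $\hat 2:=1$ and $\epsilon_k:=(-1)^{k+1}$, every entry is $\cP^{k\hat k}=\epsilon_k u$, and any differentiated entry $\cP^{ab}_\sigma$ vanishes unless $a\neq b$, in which case $\cP^{a\hat a}_\sigma=\epsilon_a u_\sigma$. This single remark is then used to kill most of the index ranges.

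For $\Gamma_1^{12}(\cP)$ I would argue that the three factors $\cP^{kk'}_{\ell'}$, $\cP^{\ell\ell'}_{m'}$, $\cP^{mm'}_{k'}$ each carry one derivative, hence each forces its pair of superscripts to differ, i.e.\ $k'=\hat k$, $\ell'=\hat\ell$, $m'=\hat m$; the remaining factor is $\cP^{12}_{k\ell m}=u_{k\ell m}$, symmetric in its three indices. Substituting leaves
\[
\Gamma_1^{12}(\cP)=\sum_{k,\ell,m\in\{1,2\}}\epsilon_k\epsilon_\ell\epsilon_m\,u_{k\ell m}\,u_{\hat k}\,u_{\hat\ell}\,u_{\hat m},
\]
and grouping the eight summands by the multiset $\{k,\ell,m\}$ — one term from $\{1,1,1\}$, one from $\{2,2,2\}$, three each from $\{1,1,2\}$ and $\{1,2,2\}$ — and reading off the sign $\epsilon_k\epsilon_\ell\epsilon_m$ together with the monomial $u_{k\ell m}u_{\hat k}u_{\hat\ell}u_{\hat m}$ in the coordinates $x^1=x$, $x^2=y$ will reproduce exactly the four terms of \eqref{Flow2D}.

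For $\Gamma_2$ the same reduction applies to the four factors $\cP^{ij}_{k\ell}$, $\cP^{km}_{k'\ell'}$, $\cP^{k'\ell}_{m'}$, $\cP^{m'\ell'}_{j}$: they force $j=\hat\imath$, $k=\hat m$, $\ell=\hat{k'}$, $\ell'=\hat{m'}$ respectively, so only $k',m'\in\{1,2\}$ remain free. I would then evaluate the resulting four-term sum at the two off-diagonal choices $(i,m)=(1,2)$ and $(2,1)$; in each case two of the four monomials cancel in pairs and one is left with
\[
\Gamma_2^{12}(\cP)=\Gamma_2^{21}(\cP)=u_x\,u_y\,\bigl(u_{xy}^2-u_{xx}u_{yy}\bigr).
\]
Since the only independent component of the skew-symmetrised bi-vector flow is $\dot\cP_2^{12}=\tfrac12\bigl(\Gamma_2^{12}(\cP)-\Gamma_2^{21}(\cP)\bigr)$, it vanishes identically.

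There is no genuine obstacle here: once the ``one entry $u$'' observation is in place the computation is routine bookkeeping, the only real risk being sign errors from $\cP^{21}=-u$ and from relabelling derivative indices. The one point I would state carefully is the last one: $\Gamma_2^{im}$ is honestly \emph{not} skew in $(i,m)$, yet in $n=2$ its two off-diagonal values happen to coincide, so it is the passage to the skew part — not a tri-vector dimension count as for the closedness of $\schouten{\cP,\Gamma_1(\cP)}$ — that makes the second flow trivial, and the proof should not conflate the two mechanisms.
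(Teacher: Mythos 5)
Your proposal is correct and follows essentially the same route as the paper: a direct expansion of $\Gamma_1^{12}$ and of both off-diagonal values of $\Gamma_2$ using the fact that in $n=2$ every entry is $\pm u$ or zero, with $\dot{\cP}_2$ vanishing because $\Gamma_2^{12}(\cP)=\Gamma_2^{21}(\cP)=u_xu_y\bigl(u_{xy}^2-u_{xx}u_{yy}\bigr)$ kills the skew-symmetrization — exactly what the paper computes in its Appendix. Your $\epsilon_k$/$\hat k$ bookkeeping and the grouping by multisets $\{k,\ell,m\}$ is merely a tidier organisation of the same index reduction the paper performs term by term.
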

\begin{proof}
We expand the differential formula for $\Gamma_{1}^{12}(\cP)$. Taking into account that the bi-vector coefficient $\cP^{ij}$ can be non-zero only when $i=1,j=2$ or $i=2,j=1$, the sum on the right-hand side of the formula for $\Gamma_{1}^{12}(\cP)$ yields:
\small
\begin{eqnarray*}
\Gamma_{1}^{12}(\cP) =& \hspace{-18mm}  \sum\limits_{k,\ell,m, k',\ell',m'=1}^{n} \cP^{12}_{klm} \cP^{kk'}_{\ell'} \cP^{\ell\ell'}_{m'} \cP^{mm'}_{k'}
=\sum\limits_{\ell,m,\ell',m'=1}^{n} \cP^{12}_{1lm} \cP^{12}_{\ell'} \cP^{\ell\ell'}_{m'} \cP^{mm'}_{2}+\cP^{12}_{2lm} \cP^{21}_{\ell'} \cP^{\ell\ell'}_{m'} \cP^{mm'}_{1}\\
=& \sum\limits_{m,m'=1}^{n} \cP^{12}_{11m} \cP^{12}_{2} \cP^{12}_{m'} \cP^{mm'}_{2}+ \cP^{12}_{12m} \cP^{12}_{1} \cP^{21}_{m'} \cP^{mm'}_{2}+ \cP^{12}_{21m} \cP^{21}_{2} \cP^{12}_{m'} \cP^{mm'}_{1}+\cP^{12}_{22m} \cP^{21}_{1} \cP^{21}_{m'} \cP^{mm'}_{1}\\
=& \hspace{-18mm}\cP^{12}_{111} \cP^{12}_{2} \cP^{12}_{2} \cP^{12}_{2}+\cP^{12}_{222} \cP^{21}_{1} \cP^{21}_{1} \cP^{21}_{1}+3\cP^{12}_{112} \cP^{12}_{2} \cP^{12}_{2} \cP^{21}_{2}+3\cP^{12}_{122} \cP^{12}_{2} \cP^{21}_{1} \cP^{21}_{2}
\end{eqnarray*}
\normalsize
A similar computation for $\dot{\cP}_{2}$ is given in Appendix~\ref{App}.
\end{proof}
\noindent
From now on, we denote the bi-vector $\dot{\cP}_{1}$ obtained from $\Gamma_{1}$ unambiguously by $\dot{\cP}$.

The definition of a flow $\dot{\cP}$ to be $\partial_{\cP}$-exact is that there exists a vector field $\cX$, defined globally on $N^{n}$, whose coefficients are differential polynomials in $\cP$, such that the flow is the Schouten bracket of this vector field $\cX$ with the Poisson bi-vector
: $\dot{\cP}=\partial_{\cP}\cX=\schouten{\cP,\cX}$.
It is readily seen that for a two-dimensional vector field $\cX=F\,\partial/\partial x + G\,\partial/\partial y$, we have that the only non-zero component of $\schouten{\cP,\cX}$ is $\schouten{\cP,\cX}^{12}= u_{x}F+u_{y}G-u(F_{x}+G_{y})$.
The conjugation equation $\dot{\cP}=\schouten{\cP,\cX}$ is now expressed via:
\begin{align}
\label{ConEq}
u_{xxx}u_y^{3}-u_{yyy}u_x^{3}-3u_{xxy}u_{x}u_{y}^{2}+3u_{xyy}u_x^{2}u_{y}=u(F_{x}+G_{y})-u_{x}F-u_{y}G.
\end{align}
Let us examine using Jets \cite{Jets} whether equation~\eqref{ConEq} has any globally defined solutions $\cX$.
\begin{example}\label{eg1}
A solution $\cX=F\,\partial/\partial x + G\,\partial/\partial y$ to equation~\eqref{ConEq}, depending on the differential monomials $u_{\sigma}$ not exceeding order $|\sigma|=3$, is:
\begin{align}\label{sol1}
\begin{split}
F&=\phantom{-}u_{yyy}u_{x}^{2}-{2}u_{xyy}u_{x}u_{y}+u_{xxy}u_{y}^{2}+{2}u_{xx}u_{yy}u_{y}-{2}u_{xy}^{2}u_{y}+cu_{y},\\
G&=-u_{xxx}u_{y}^{2}+{2}u_{xxy}u_{x}u_{y}-u_{xyy}u_{x}^{2}-{2}u_{xx}u_{yy}u_{x}+{2}u_{xy}^{2}u_{x}-cu_{x},\ c \in \mathbb{R}.
\end{split}
\end{align}
It is easily verified that this vector field is divergence-free\footnote{If the vector field $\cX$ is divergence-free, then equation~\eqref{ConEq} with respect to $F$ and $G$ splits into an algebraic equation and an additional restriction on the vector field components. The solution to the divergence-free equation is of the form $F =\varphi(x-y)-\psi(y)$ and $G=\varphi(x-y)-\chi(x),$ for functions $\varphi$ ,$\psi$ and $\chi$ to be determined.}: $F_{x}+G_{y}=0$.
\end{example}
One can obtain a more general solution by allowing higher differential orders $|\sigma|$ of the monomials $u_{\sigma}$ that make up the vector field components $F$ and $G$. The following method was hinted to us by P. Safronov and I. Khavkine.\footnote{\label{constants}See \text{http://mathoverflow.net/questions/209376}. A solution reported there is obtained by setting the constant $c=0$ in equation~\eqref{sol1}. Choosing constants $a,b,c,d,e$ equal to zero in solution \eqref{EqBigSol} from Proposition~\ref{SolGen} yields again that solution.} Consider the weight homomorphism $\text{wt}_{x}(u_{\sigma})=(\#x\in \sigma)$, such that $\text{wt}_{x}(u_{\sigma}u_{\tau})=\text{wt}_{x}(u_{\sigma})+\text{wt}_{x}(u_{\tau})$. Similarly, we let $\text{wt}_{y}(u_{\sigma})=(\#y\in \sigma)$. We now let the differential polynomials that make up either side of equation~\eqref{ConEq} be quartic in $u$ and homogeneous in $\text{wt}_{x}$ and $\text{wt}_{y}$. Under these assumptions, it follows from comparing the left-hand and righ-hand sides of equation~\eqref{ConEq} that the polynomial terms of $F$ and $G$ are cubic in $u_{\sigma}$, that the differential polynomials of $F$ have $\text{wt}_{x}=2$ and $\text{wt}_{y}=3$, and those of $G$ have $\text{wt}_{x}=3$ and $\text{wt}_{y}=2$.
Specifically, $F=\sum_{i=1}^{N}c_{i}u_{\sigma_{1}^{i}}u_{\sigma_{2}^{i}}u_{\sigma_{3}^{i}}$ such that $\text{wt}_{x}(u_{\sigma_{1}^{i}}u_{\sigma_{2}^{i}}u_{\sigma_{3}^{i}})=2$ and $\text{wt}_{y}(u_{\sigma_{1}^{i}}u_{\sigma_{2}^{i}}u_{\sigma_{3}^{i}})=3$ for all $1\leqslant i \leqslant N$, where $N\in \mathbb{N}$ and $c_{i}\in \mathbb{R}$. Similarly, $G=\sum_{j=1}^{M}d_{j}u_{\tau_{1}^{j}}u_{\tau_{2}^{j}}u_{\tau_{3}^{j}}$ such that $\text{wt}_{x}(u_{\tau_{1}^{j}}u_{\tau_{2}^{j}}u_{\tau_{3}^{j}})=3$ and $\text{wt}_{y}(u_{\tau_{1}^{j}}u_{\tau_{2}^{j}}u_{\tau_{3}^{j}})=2$ for all $1\leqslant j \leqslant M$, where $M\in \mathbb{N}$ and $d_{j}\in \mathbb{R}$. In fact, $M=N=12$.
We now substitute these polynomial expressions with undetermined coefficients for $F$ and $G$ into equation~\eqref{ConEq} and solve for the coefficients.
%
%
\begin{proposition}\label{SolGen}
Under all the above assumptions, the space of solutions to equation~\eqref{ConEq} is given by the vector fields $\cX=F\,\partial/\partial x + G\,\partial/\partial y$, where
\begin{align}\label{EqBigSol}
\begin{split}
&F=(a+b)u_{xxyyy}u^{2}+bu_{xxyy}u_{y}u+cu_{xyyy}u_{x}u+u_{xxy}u_{y}^{2}+du_{xxy}u_{yy}u+eu_{xyy}u_{xy}u\\
&\qquad+(a+d)u_{yyy}u_{xx}u-2u_{xyy}u_{x}u_{y}+u_{yyy}u_{x}^{2}+2u_{xx}u_{yy}u_{y}-2u_{xy}^{2}u_{y},
\\
&G=-(a+b)u_{xxxyy}u^{2}+au_{xxxy}u_{y}u-(a+b+c)u_{xxyy}u_{x}u-(a+d)u_{xxx}u_{yy}u\\
&\qquad+(a+c-e)u_{xxy}u_{xy}u-(a+c+d)u_{xyy}u_{xx}u-u_{xxx}u_{y}^{2}+2u_{xxy}u_{x}u_{y}-u_{xyy}u_{x}^{2}\\&\qquad-2u_{xx}u_{yy}u_{x}+2u_{xy}^{2}u_{x},
\end{split}
\end{align}
and $a,b,c,d,e$ are real constants. These vector fields are not divergence-free except for the case when all coefficients vanish (without the entire solution vanishing, because the solution then becomes equal to \eqref{sol1} at $c=0$). 
\end{proposition}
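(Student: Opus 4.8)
The plan is to solve the conjugation equation~\eqref{ConEq} directly inside the finite-dimensional family of candidate vector fields singled out by the grading, and then to read off the divergence from the shape of the answer. \emph{First}, I would list the admissible monomials. A cubic differential monomial $u_{\sigma_1}u_{\sigma_2}u_{\sigma_3}$ can enter $F$ exactly when $\text{wt}_x=2$ and $\text{wt}_y=3$; recording each factor $u_\sigma$ by the pair $(\#x\in\sigma,\,\#y\in\sigma)$ of non-negative integers, this is the number of ways of writing $(2,3)$ as an unordered sum of three such pairs, which is $12$, and by the $x\leftrightarrow y$ symmetry the count for $G$ is $12$ as well. So one writes $F=\sum_{i=1}^{12}c_i(\cdots)$ and $G=\sum_{j=1}^{12}d_j(\cdots)$ with $24$ unknown real constants, as announced before the statement.

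\emph{Second}, substitute these into the right-hand side $u(F_x+G_y)-u_xF-u_yG$ of~\eqref{ConEq} and expand by the chain rule over the jet variables. Both sides become explicit $\BBR$-linear combinations of the finitely many $u$-quartic jet monomials of weight $(3,3)$, the left-hand side being the fixed combination~\eqref{Flow2D}; equating coefficients yields an inhomogeneous linear system $A\,(c,d)^{\mathsf T}=v$ in the $24$ unknowns. It is consistent because Example~\ref{eg1} at $c=0$ already supplies a solution of the assumed form. Proposition~\ref{SolGen} is then equivalent to $\dim\ker A=5$ together with the identification of the affine solution set with~\eqref{EqBigSol}, so the heart of the proof is this large but entirely mechanical rank computation.

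\emph{Third}, to arrange the answer and to cross-check the dimension I would bring in Hamiltonian vector fields. For any differential polynomial $h$ the vector field $X_h:=\schouten{\cP,h}=u\,h_y\,\partial/\partial x-u\,h_x\,\partial/\partial y$ satisfies $\schouten{\cP,X_h}=0$ by the graded Jacobi identity, hence solves the homogeneous version of~\eqref{ConEq}; for $X_h$ to lie in the candidate family, $h$ must be quadratic in the $u_\sigma$ of weight $(2,2)$, and there are exactly five such monomials,
\[
h\in\Span_{\BBR}\bigl\{\,u\,u_{xxyy},\ u_xu_{xyy},\ u_yu_{xxy},\ u_{xx}u_{yy},\ u_{xy}^{2}\,\bigr\}.
\]
The map $h\mapsto X_h$ is injective on this space (if $X_h=0$ then $h_x=h_y=0$, so $h$ is a constant, hence $0$ by weight), which gives $\dim\ker A\geqslant5$ a priori; the computation of the previous step must then produce the matching upper bound. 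One checks finally that subtracting the particular solution~\eqref{sol1} at $c=0$ from~\eqref{EqBigSol} returns $X_h$ for $h=-(a+b)u\,u_{xxyy}-c\,u_xu_{xyy}+a\,u_yu_{xxy}-(a+d)u_{xx}u_{yy}+\tfrac{c-e}{2}\,u_{xy}^{2}$, and that $(a,b,c,d,e)$ depends linearly and invertibly on the coefficients of this $h$; this reproduces exactly~\eqref{EqBigSol}.

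\emph{Fourth}, the divergence claim. Since~\eqref{sol1} at $c=0$ is divergence-free (Example~\ref{eg1}) and $(u\,h_y)_x-(u\,h_x)_y=u_x h_y-u_y h_x$, the divergence of the general solution~\eqref{EqBigSol} equals $u_x h_y-u_y h_x$ for the $h$ above; it remains to see this vanishes only for $h=0$. In the polynomial ring $\BBR[u_\sigma]$ the variables $u_x$ and $u_y$ are coprime, so $u_x h_y=u_y h_x$ forces $h_x=u_x P$ and $h_y=u_y P$ for a common polynomial $P$; comparing the mixed derivatives $(h_x)_y=(h_y)_x$ gives $u_x P_y=u_y P_x$, the same equation with $\deg P=\deg h-1$ and $\text{wt}(P)=(2,2)$. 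Descending on the degree, $P=0$, hence $h_x=h_y=0$ and $h$ is a weight-$(2,2)$ constant, i.e.\ $h=0$; equivalently $a=b=c=d=e=0$, in which case~\eqref{EqBigSol} reduces to~\eqref{sol1} at $c=0$, which does not vanish identically. The main obstacle is the second step: the $24$-variable linear system has a comparable number of equations, so carrying it out correctly, and above all certifying that $\operatorname{rank}A=19$ exactly so that no solution is overlooked, is best delegated to a computer-algebra check---the Hamiltonian argument of the third step guaranteeing only the lower bound $\dim\ker A\geqslant5$.
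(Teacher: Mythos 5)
Your proposal is correct, and its computational core coincides with what the paper actually does: the paper states Proposition~\ref{SolGen} with no proof environment at all, relying on the preceding description of the method --- write $F$ and $G$ as the $12+12$ weight-homogeneous cubic monomials with undetermined coefficients, substitute into~\eqref{ConEq}, and solve the resulting linear system (in practice with the \textsf{Jets} package). Where you go beyond the paper is in steps three and four. The identification of the five-parameter freedom with Hamiltonian vector fields $\schouten{\cP,h}$ for $h$ quadratic of weight $(2,2)$ gives an a priori explanation of the kernel's lower bound and, combined with Example~\ref{eg1}, verifies that every field in~\eqref{EqBigSol} really does solve~\eqref{ConEq}; and your coprimality/descent argument actually proves the divergence assertion, which the paper merely states. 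Two caveats. First, with your own convention $X_h=u\,h_y\,\partial/\partial x-u\,h_x\,\partial/\partial y$, the difference of~\eqref{EqBigSol} and~\eqref{sol1} at $c=0$ works out to $X_{-h}$ for the $h$ you display (e.g.\ the coefficient of $u^{2}u_{xxyyy}$ in $u\,h_y$ is $-(a+b)$, not $a+b$), so either negate $h$ or flip the sign convention; this does not affect the invertibility of the map $(a,b,c,d,e)\mapsto h$ nor the divergence argument, whose conclusion only uses $h=0\Leftrightarrow$ all constants vanish. Second, be explicit that the completeness half of the statement --- that \emph{no} solutions within the ansatz are missed, i.e.\ $\dim\ker A\leqslant 5$ --- still rests on the finite rank computation you delegate to computer algebra; this is exactly the status of the paper's own (computer-assisted, unpublished) verification, so it is not a gap relative to the paper, but your Hamiltonian argument alone does not supply it.
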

Now that we have a vector field $\cX$ such that $\partial_{\cP}\cX=\dot{\cP}$ over a 2-dimensional affine manifold, we inspect 
whether its construction can be repeated to trivialize $\dot{\cP}$ on all finite-dimensional affine manifolds. For this, we pass from the dimension dependent differential polynomials to dimension independent representations of said differential polynomials, by using Kontsevich graphs.

The graphs consist of ground vertices, drawn at the bottom of the figure, and internal vertices.
All internal vertices in the graph represent a copy of $\cP^{ij}$. Every vertex is the tail for an ordered pair of outgoing edges. These edges are labelled by the summation indices $i,j$ in $\cP^{ij}$ inside the vertex. In the 2-dimensional case, every vertex with $u=\cP^{12}$ has outgoing edges labelled $1$ and $2$, respectively. Incoming edges with value $1$ or $2$ imply differentiation of the target vertex with respect to $x^{1}$ or $x^{2}$, respectively. Finally, the internal vertices connected by edges form a product of bi-vector components $u=\cP^{12}$ and derivatives thereof. The result is then multiplied by the arguments of the ground vertices that are differentiated as specified by the incoming edges.
\begin{lemma}\label{SolGraph}
Solving equation~\eqref{ConEq}, the divergence-free vector field \eqref{sol1} in Example~\ref{eg1} is realizable in terms of the Kontsevich graphs:
\begin{align}
\label{EqXFig}
\cX \, =
\hspace{-20mm}
\raisebox{-30pt}[30pt][15pt]{
	\unitlength=1.mm
	\special{em:linewidth 0.4pt}
	\linethickness{0.4pt}
	\begin{picture}(30.00,27.00)
	\put(24.7,10.65){\vector(0,-1){6.00}}
	\put(21.7,7.65){\makebox(0,0)[rc]{}}
	\qbezier(24.7,10.65)(14.7,6.65)(18.7,13.65)
	\put(18.7,13.65){\vector(1,1){0.00}}
	\put(27.7,7.65){\makebox(0,0)[lc]{}}	
	\put(19.7,15.65){\vector(1,-1){5.00}}
	\put(21.7,12.0){\makebox(0,0)[rc]{}}
	\put(29.7,15.65){\vector(-1,-1){5.00}}
	\put(27.7,12.0){\makebox(0,0)[lc]{}}
	\qbezier(19.7,15.65)(24.7,19.65)(29.7,15.65)
	\put(29.7,15.65){\vector(2,-1){0.00}}
	\put(25.3,18.965){\makebox(0,0)[rc]{}}
	\qbezier(19.7,15.65)(24.7,11.65)(29.7,15.65)
	\put(19.7,15.65){\vector(-2,1){0.00}}
	\put(25.3,14.965){\makebox(0,0)[rc]{}}
	\put(24.7,15.65){\oval(12,12)}
	\end{picture}
}
\,+c\cdot
\hspace{-25mm}
\raisebox{-30pt}[10pt][0pt]{
	\unitlength=1.mm
	\special{em:linewidth 0.4pt}
	\linethickness{0.4pt}
	\begin{picture}(30.00,27.00)
	\put(24.7,14.15){\circle{7}}
	\put(24.7,10.65){\vector(1,0){0.00}}
	\put(21.7,7.65){\makebox(0,0)[rc]{}}
	\put(24.7,10.65){\vector(0,-1){6.00}}
	\put(27.7,7.65){\makebox(0,0)[lc]{}}	
	\end{picture}
}.
\end{align}
\end{lemma}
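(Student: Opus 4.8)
The plan is to make the graph calculus on the right-hand side of~\eqref{EqXFig} completely explicit in dimension two and then match it, monomial by monomial, against~\eqref{sol1}. The first task is to read off the combinatorial data of the two pictured graphs: the set of internal vertices (each a copy of $\cP^{\alpha\beta}$), the ordered pair of out-edges at each such vertex (which fixes the two upper indices, say ``first'' $=\alpha$, ``second'' $=\beta$), the target of every edge, and which single out-edge lands on the ground vertex. With this data in hand, each graph becomes, by the stated dictionary (an in-edge labelled $s$ applies $\partial/\partial x^{s}$ to its target; the ground-vertex edge carries the free index and differentiates the argument), a finite sum over all labellings of the edges by $\{1,\dots,n\}$ of a product of factors $\cP^{\alpha\beta}_{\sigma}$ times one derivative of the argument. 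This is the dimension-independent form, and nothing in it yet refers to $n=2$, which is exactly why one passes to graphs.

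Next, specialise to $n=2$, where the only nonzero components are $\cP^{12}=u=-\cP^{21}$. A labelling contributes nothing unless, at every internal vertex, its two out-edges carry the two distinct values $1,2$; choosing the order ``$2$ then $1$'' rather than ``$1$ then $2$'' at a vertex costs one minus sign. Hence each graph collapses to a \emph{signed} sum over the $2^{v}$ order-patterns, $v$ being the number of internal vertices, and for each pattern one just records, from the in-edges, which coordinate derivatives fall on each $u$, producing a monomial in $u,u_{x},u_{y},u_{xx},\dots$ times $\partial/\partial x$ or $\partial/\partial y$ of the argument. For the one-vertex loop graph ($v=1$) this is instantaneous: it evaluates to the divergence vector field $\operatorname{div}\cP$ (the contraction of $\partial_{s}$ into the $s$-th slot of $\cP$), which in two dimensions is $u_{y}\,\partial/\partial x-u_{x}\,\partial/\partial y$ up to an overall sign --- precisely the $c$-term of~\eqref{sol1}. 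For the three-vertex (``oval'') graph ($v=3$) the same routine produces the remaining cubic part of $F$ and $G$; collecting the $2^{3}$ contributions and merging equal monomials is intended to reproduce the (at most) twelve monomials of~\eqref{sol1}, coefficients $\pm1,\pm2$ included.

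Two checks then come for free and are worth stating. The coefficients of $\partial/\partial x$ and $\partial/\partial y$ obtained this way must satisfy $F_{x}+G_{y}=0$, as in Example~\ref{eg1}; on the graph side this is the cancellation of the two ``divergence'' graphs obtained by inserting an extra derivative into the pictures. And the resulting $\cX$ is, by construction, a differential polynomial in $\cP$ alone, so the trivialisation $\dot{\cP}=\schouten{\cP,\cX}$ is manifestly coordinate-free --- the property needed before one can even pose the question of whether these same graphs still trivialise $\dot{\cP}$ when $n\geqslant 3$. (It is natural that such a graph exists: the three-vertex graph is essentially the tetrahedron underlying $\Gamma_{1}$ with its distinguished vertex folded onto the single ground vertex, so that $\schouten{\cP,\cdot}$ sends it back to $\dot{\cP}$.)

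The genuinely delicate point is not any one computation but the correct and consistent fixing of the orientation-and-ordering conventions --- which out-edge of each vertex is the ``first'' index, and the exact sign picked up when a ``$21$''-vertex is rewritten via $\cP^{12}=-\cP^{21}$ --- because a wrong choice perturbs the coefficients silently. Since~\eqref{sol1} carries a dozen monomials with non-unit coefficients, the safe route is to carry out the $2^{v}$-fold expansion symbolically, as is done elsewhere in the paper, and verify directly that $(\text{three-vertex graph})+c\cdot(\text{one-vertex loop graph})$ equals~\eqref{sol1}, rather than by hand.
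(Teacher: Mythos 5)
Your strategy is the reverse of the paper's: you propose to expand the pictured graphs directly in dimension two (a signed sum over the $2^{v}$ assignments of $\cP^{12}=u=-\cP^{21}$ at the internal vertices) and match the result against~\eqref{sol1}, whereas the paper argues constructively: since $F_{x}+G_{y}=0$, it potentiates the field as $F=H_{y}$, $G=-H_{x}$ with $H=u_{xx}u_{y}^{2}-2u_{xy}u_{x}u_{y}+u_{yy}u_{x}^{2}+cu$, realizes $H$ as Kontsevich graphs with two ground legs, and then obtains the one-vector graphs of~\eqref{EqXFig} by letting one ground leg fall on the internal vertices via the Leibniz rule. Your evaluation of the one-vertex loop graph (giving $u_{y}\,\partial/\partial x-u_{x}\,\partial/\partial y$, i.e.\ the $c$-terms) is correct, and in principle a direct $n=2$ expansion of the three-vertex graph would also prove the lemma.

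However, as written your argument has a genuine gap at exactly the point that constitutes the lemma's content. First, the three-vertex picture in~\eqref{EqXFig} is not a single Kontsevich graph whose edge data you can simply ``read off'': the oval is shorthand for the Leibniz-rule sum in which the second leg of the two-ground-vertex graph encoding $H$ falls successively on each of the three internal vertices. Without fixing this interpretation (and the ordering of out-edges at each vertex, i.e.\ the sign conventions you yourself flag as delicate), the object you intend to expand is under-determined, so the ``combinatorial data'' step is not actually available to you. Second, the monomial-by-monomial match with the cubic part of~\eqref{sol1} is never carried out: you say the expansion ``is intended to reproduce'' the twelve monomials and defer the check to a symbolic computation. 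Since the lemma asserts precisely this identity, deferring it leaves the claim unproved. The paper's route via the potential $H$ avoids both issues: the graphs are built from $H$, so equality with~\eqref{sol1} holds by construction and no a posteriori coefficient matching (with its attendant sign ambiguities) is needed. To repair your version, either fix the edge structure and orderings explicitly and exhibit the eight labelled contributions of the three-vertex sum, or adopt the potentiation argument.
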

\begin{proof}
Whenever $F_{x}+G_{y}=0$ on $\mathbb{R}^{2} \ni (x,y)$, the vector field components $F$ and $G$ can be potentiated such that $F=H_{y},G=-H_{x}$; we let  
$$H=u_{xx}u_{y}^{2}-2u_{xy}u_{x}u_{y}+u_{yy}u_{x}^{2}+cu,$$
where $H$ can be viewed as a scalar function.
These differential monomials can be written as Kontsevich graphs (see above):
$$\cH=
\hspace{-20mm}
\raisebox{-30pt}[25pt][17pt]{
	\unitlength=1.mm
	\special{em:linewidth 0.4pt}
	\linethickness{0.4pt}
	\begin{picture}(30.00,27.00)
	\put(24.7,10.65){\vector(1,-2){3.00}}
	\put(21.7,7.65){\makebox(0,0)[rc]{\tiny$1$}}
	\put(24.7,10.65){\vector(-1,-2){3.00}}
	\put(27.7,7.65){\makebox(0,0)[lc]{\tiny$2$}}	
	\put(19.7,15.65){\vector(1,-1){5.00}}
	\put(21.7,12.0){\makebox(0,0)[rc]{\tiny$1$}}
	
	\put(29.7,15.65){\vector(-1,-1){5.00}}
	\put(27.7,12.0){\makebox(0,0)[lc]{\tiny$1$}}
	\qbezier(19.7,15.65)(24.7,19.65)(29.7,15.65)
	\put(29.7,15.65){\vector(2,-1){0.00}}
	\put(25.3,18.965){\makebox(0,0)[rc]{\tiny$2$}}
	
	\qbezier(19.7,15.65)(24.7,11.65)(29.7,15.65)
	\put(19.7,15.65){\vector(-2,1){0.00}}
	\put(25.3,14.965){\makebox(0,0)[rc]{\tiny$2$}}
	\end{picture}
}
-
\hspace{-22mm}
\raisebox{-30pt}[10pt][10pt]{
	\unitlength=1.mm
	\special{em:linewidth 0.4pt}
	\linethickness{0.4pt}
	\begin{picture}(30.00,27.00)
	\put(24.7,10.65){\vector(1,-2){3.00}}
	\put(21.7,7.65){\makebox(0,0)[rc]{\tiny$1$}}
	\put(24.7,10.65){\vector(-1,-2){3.00}}
	\put(27.7,7.65){\makebox(0,0)[lc]{\tiny$2$}}	
	\put(19.7,15.65){\vector(1,-1){5.00}}
	\put(21.7,12.0){\makebox(0,0)[rc]{\tiny$2$}}
	
	\put(29.7,15.65){\vector(-1,-1){5.00}}
	\put(27.7,12.0){\makebox(0,0)[lc]{\tiny$1$}}
	\qbezier(19.7,15.65)(24.7,19.65)(29.7,15.65)
	\put(29.7,15.65){\vector(2,-1){0.00}}
	\put(25.3,18.965){\makebox(0,0)[rc]{\tiny$1$}}
	
	\qbezier(19.7,15.65)(24.7,11.65)(29.7,15.65)
	\put(19.7,15.65){\vector(-2,1){0.00}}
	\put(25.3,14.965){\makebox(0,0)[rc]{\tiny$2$}}
	\end{picture}
}
-
\hspace{-22mm}
\raisebox{-30pt}[10pt][10pt]{
	\unitlength=1.mm
	\special{em:linewidth 0.4pt}
	\linethickness{0.4pt}
	\begin{picture}(30.00,27.00)
	\put(24.7,10.65){\vector(1,-2){3.00}}
	\put(21.7,7.65){\makebox(0,0)[rc]{\tiny$1$}}
	\put(24.7,10.65){\vector(-1,-2){3.00}}
	\put(27.7,7.65){\makebox(0,0)[lc]{\tiny$2$}}	
	\put(19.7,15.65){\vector(1,-1){5.00}}
	\put(21.7,12.0){\makebox(0,0)[rc]{\tiny$1$}}
	
	\put(29.7,15.65){\vector(-1,-1){5.00}}
	\put(27.7,12.0){\makebox(0,0)[lc]{\tiny$2$}}
	\qbezier(19.7,15.65)(24.7,19.65)(29.7,15.65)
	\put(29.7,15.65){\vector(2,-1){0.00}}
	\put(25.3,18.965){\makebox(0,0)[rc]{\tiny$2$}}
	
	\qbezier(19.7,15.65)(24.7,11.65)(29.7,15.65)
	\put(19.7,15.65){\vector(-2,1){0.00}}
	\put(25.3,14.965){\makebox(0,0)[rc]{\tiny$1$}}
	\end{picture}
}
+
\hspace{-22mm}
\raisebox{-30pt}[10pt][10pt]{
	\unitlength=1.mm
	\special{em:linewidth 0.4pt}
	\linethickness{0.4pt}
	\begin{picture}(30.00,27.00)
	\put(24.7,10.65){\vector(1,-2){3.00}}
	\put(21.7,7.65){\makebox(0,0)[rc]{\tiny$1$}}
	\put(24.7,10.65){\vector(-1,-2){3.00}}
	\put(27.7,7.65){\makebox(0,0)[lc]{\tiny$2$}}	
	\put(19.7,15.65){\vector(1,-1){5.00}}
	\put(21.7,12.0){\makebox(0,0)[rc]{\tiny$2$}}
	
	\put(29.7,15.65){\vector(-1,-1){5.00}}
	\put(27.7,12.0){\makebox(0,0)[lc]{\tiny$2$}}
	\qbezier(19.7,15.65)(24.7,19.65)(29.7,15.65)
	\put(29.7,15.65){\vector(2,-1){0.00}}
	\put(25.3,18.965){\makebox(0,0)[rc]{\tiny$1$}}
	
	\qbezier(19.7,15.65)(24.7,11.65)(29.7,15.65)
	\put(19.7,15.65){\vector(-2,1){0.00}}
	\put(25.3,14.965){\makebox(0,0)[rc]{\tiny$1$}}
	\end{picture}
}
+c\cdot
\hspace{-24mm}
\raisebox{-20pt}[10pt][10pt]{
	\unitlength=1.mm
	\special{em:linewidth 0.4pt}
	\linethickness{0.4pt}
	\begin{picture}(30.00,27.00)
	\put(24.7,10.65){\vector(1,-2){3.00}}
	\put(21.7,7.65){\makebox(0,0)[rc]{\tiny$1$}}
	\put(24.7,10.65){\vector(-1,-2){3.00}}
	\put(27.7,7.65){\makebox(0,0)[lc]{\tiny$2$}}	
	\end{picture}
}.
$$
The five graphs with fixed values either $1$ or $2$ can be obtained from the two labelled graphs below, by letting the labels in $(i,j,k,l,m,n)$ and $(i,j)$ all run over the values $1$ and $2$ in a sum.
$$\cH=\frac{1}{2} \left(
\hspace{-20mm}
\raisebox{-30pt}[25pt][14pt]{
	\unitlength=1.mm
	\special{em:linewidth 0.4pt}
	\linethickness{0.4pt}
	\begin{picture}(30.00,27.00)
	\put(24.7,10.65){\vector(1,-2){3.00}}
	\put(21.7,7.65){\makebox(0,0)[rc]{\tiny$i$}}
	\put(24.7,10.65){\vector(-1,-2){3.00}}
	\put(27.7,7.65){\makebox(0,0)[lc]{\tiny$j$}}	
	\put(19.7,15.65){\vector(1,-1){5.00}}
	\put(21.7,12.0){\makebox(0,0)[rc]{\tiny$k$}}
	\put(29.7,15.65){\vector(-1,-1){5.00}}
	\put(27.7,12.0){\makebox(0,0)[lc]{\tiny$m$}}
	\qbezier(19.7,15.65)(24.7,19.65)(29.7,15.65)
	\put(29.7,15.65){\vector(2,-1){0.00}}
	\put(25.3,18.965){\makebox(0,0)[rc]{\tiny$l$}}
	\qbezier(19.7,15.65)(24.7,11.65)(29.7,15.65)
	\put(19.7,15.65){\vector(-2,1){0.00}}
	\put(25.3,14.965){\makebox(0,0)[rc]{\tiny$n$}}
	\end{picture}
}
+c\cdot
\hspace{-25mm}
\raisebox{-20pt}[10pt][10pt]{
	\unitlength=1.mm
	\special{em:linewidth 0.4pt}
	\linethickness{0.4pt}
	\begin{picture}(30.00,27.00)
	\put(24.7,10.65){\vector(1,-2){3.00}}
	\put(21.7,7.65){\makebox(0,0)[rc]{\tiny$i$}}
	\put(24.7,10.65){\vector(-1,-2){3.00}}
	\put(27.7,7.65){\makebox(0,0)[lc]{\tiny$j$}}	
	\end{picture}
}\right).
$$
As a graph, $\cH$ encodes a bi-vector.
In order to obtain $F$ and $G$, respectively, one must differentiate with respect to $x$ and $y$. Observe that there are two edges, labelled $i$ and $j$, falling on ground vertices. Since a one-vector is encoded by a graph with only $1$ ground vertex, we let one of these edges, edge $j$, fall, by the Leibniz rule, on the above three internal vertices. Summation over the index $j$ guarantees differentiation with respect to both $x$ and $y$.
The same goes for the second graph. We have represented the vector field $\cX$ by two Kontsevich graphs, see \eqref{EqXFig}.
\end{proof}
\begin{rem}
The generic vector field described in Proposition~\ref{SolGen} cannot be realized in terms of Kontsevich graphs, due to the presence of terms like $u_{xxyyy}u^{2}$ when $a+b\neq0$. This term would have to be represented by a graph with $5$ edges falling on a single vertex. Since there are only $3$ vertices in total, one vertex would have to send both its edges to the vertex encoding $u_{xxyyy}$. We know however, that graphs containing such `double edges' vanish identically.
\end{rem}
\begin{rem}
The vector field described by the two graphs in \eqref{EqXFig} exists in higher dimensions but no longer solves the conjugation equation $\schouten{\cP,\cX}=\dot{\cP}$. This is verified by 
evaluating the Poisson differential acting on the graphs encoding $\cX$. Comparing the result with the graph realization of the first tetrahedral flow, as described in \cite[Figure 2]{f16}, one observes that the graphs in the Schouten bracket $\schouten{\cP,\cX}$ do not equal those in the flow $\cQ_{1:6}$. Therefore, the graphs in equation~\eqref{EqXFig} cannot be expected to trivialize the flows of all Poisson structures on all manifolds of dimension $n\geqslant3$.
\end{rem}
%
Given any lattice $L$ in $\mathbb{R}^{n}$, as a finitely generated abelian group, there exist a basis for this lattice, denoted by $b_{1},\ldots,b_{m}$ with $m \leqslant n$. By the Gram-Schmidt orthogonalization process, there exists an invertible map between the this basis and an orthogonal basis $o_{1},\ldots, o_{m}$. This orthogonal basis is not necessarily a basis for the lattice $L$, but rather one for a lattice $\widehat{L}$ that is isomorphic to $L$.
\begin{proposition}
Consider a lattice $L$ in $\mathbb{R}^{n}$ and the associated projection under taking the quotient $\pi: \mathbb{R}^{n}\rightarrow\mathbb{R}^{n}/L$. Let $\cP$ be a Poisson bi-vector on the $n$-dimensional affine manifold $\mathbb{R}^{n}/L$, whose coefficients are $L$-periodic Fourier series. Then
\begin{enumerate}
\item the image of the flow $\dot{\cP}$ under projection is a well-defined bi-vector on $M^{n}=\mathbb{R}^{n}/L$, and 
\item for $n=2$, the image under projection of the trivializing vector field $\cX$ is well defined on $M^{n}=\mathbb{R}^{n}/L$.
\end{enumerate}
\end{proposition}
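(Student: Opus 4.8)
The plan is to reduce the proposition to one elementary observation: the bi-vector $\dot{\cP}$ and, for $n=2$, the vector field $\cX$ are obtained from $\cP$ by a rule that is natural under affine changes of coordinates and involves no explicit dependence on $\mathbf{x}$, whereas the chart transition maps of the flat affine manifold $M^n=\mathbb{R}^n/L$ are translations by lattice vectors. Hence $L$-periodicity of the coefficients of $\cP$ is inherited by $\dot{\cP}$ and $\cX$, and that is exactly the assertion that these objects descend through $\pi$.

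First I would fix the affine structure. Starting from a basis $b_1,\dots,b_m$ of $L$ (with $m\leqslant n$) and applying the Gram--Schmidt step recalled before the proposition, one passes to the isomorphic orthogonal lattice $\widehat L=\mathbb{Z}o_1+\dots+\mathbb{Z}o_m$; the underlying linear map gives an affine diffeomorphism $\mathbb{R}^n/\widehat L\xrightarrow{\ \sim\ }\mathbb{R}^n/L$, under which $M^n\cong\mathbb{T}^m\times\mathbb{R}^{n-m}$ and ``$L$-periodic Fourier series'' becomes an ordinary multiple Fourier series in the first $m$ coordinates. Because $\Gamma_1^{ij}$ is produced by Kontsevich's $\mathrm{GL}_n$-equivariant graphical calculus \cite{Ascona96} --- equivalently, is assembled from the iterated flat covariant derivatives $\cP^{ij}_\sigma$ of $\cP$ contracted along a fixed graph --- and because the components $F,G$ of $\cX$ in \eqref{EqBigSol} (and in \eqref{sol1}) are weight-homogeneous differential polynomials in $u=\cP^{12}$ with constant coefficients, both constructions commute with this linear change of variables. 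It therefore suffices to prove the statement after the reduction, where every chart overlap on $M^n$ is a translation $T_v\colon\mathbf{x}\mapsto\mathbf{x}+v$, $v\in\widehat L$, which fixes the standard frame of $T\mathbb{R}^n$ and commutes with every $\partial/\partial x^a$.

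The core step is the periodicity argument. By hypothesis $\cP^{ij}\circ T_v=\cP^{ij}$ for all lattice vectors $v$, so $(\cP^{ij}_\sigma)\circ T_v=\partial_\sigma(\cP^{ij}\circ T_v)=\cP^{ij}_\sigma$ for every multi-index $\sigma$; for smooth coefficients --- equivalently, rapidly convergent Fourier series --- the termwise differentiations and the finitely many products appearing in the formulas raise no convergence issue. Since $\Gamma_1^{ij}(\cP)$, and for $n=2$ the components $F$ and $G$ of $\cX$ from Proposition~\ref{SolGen}, are sums of products of the $\cP^{ij}_\sigma$ with constant coefficients, they are $L$-periodic as well. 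Thus $\dot{\cP}$ and $\cX$ are each translation-invariant and hence equal to $\pi^{\ast}$ of a uniquely determined bi-vector, respectively vector field, on $M^n$; this is precisely the well-definedness of $\pi_{\ast}\dot{\cP}$ (part~(1)) and $\pi_{\ast}\cX$ (part~(2)).

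The one point needing genuine care, rather than being a formality, is the naturality claim used in the first step: that $\Gamma_1$ and the \emph{specific} trivializing field $\cX$ carry no hidden explicit $\mathbf{x}$-dependence and transform as a bi-vector and as a vector field, respectively, under the linear transition maps, so that periodicity of $\cP$ really does force periodicity of the output. For $\Gamma_1$ this is intrinsic to the graphical definition; for $\cX$ it follows from the fact that \eqref{EqBigSol} and \eqref{sol1} are universal differential polynomials in $\cP^{12}$, and in the graph realization of Lemma~\ref{SolGraph} it is immediate. Once this is in hand, descent to $M^n$ is automatic, and a lattice of rank $m<n$ contributes nothing beyond the product structure $\mathbb{T}^m\times\mathbb{R}^{n-m}$ already absorbed in the Gram--Schmidt reduction.
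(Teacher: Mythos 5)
Your argument is correct, and it reaches the same conclusion as the paper by a cleaner mechanism. The paper works on the Fourier side: it observes that $\dot{\cP}$ and $\cX$ are products of derivatives of the $L$-periodic Fourier series $\cP^{ij}$, reduces the resulting products of trigonometric functions by product-to-sum substitutions, and notes that the resulting wavenumbers stay compatible with $L$-periodicity (as in Example~\ref{ExL}); your proof instead argues directly with translation invariance: since $\cP^{ij}\circ T_{v}=\cP^{ij}$ for $v\in L$, and since $\Gamma_{1}^{ij}(\cP)$ and the components $F,G$ in \eqref{sol1}, \eqref{EqBigSol} are constant-coefficient differential polynomials with no explicit dependence on $\mathbf{x}$, invariance is preserved under $\partial_{\sigma}$ and finite products, so $\dot{\cP}$ and $\cX$ descend through $\pi$. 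This buys you something: you avoid the Fourier bookkeeping entirely (the paper's phrase that the new wavenumbers are ``no smaller'' is beside the point --- what matters is that they remain integer combinations of the original ones, i.e.\ in the dual lattice, and your formulation makes that issue disappear), and you make explicit the one substantive hypothesis, namely the absence of hidden $\mathbf{x}$-dependence and the tensorial/naturality behaviour of $\Gamma_{1}$ and of the universal formula for $\cX$, which the paper leaves implicit. Two small remarks: the Gram--Schmidt reduction to an orthogonal lattice $\widehat{L}$ is harmless but unnecessary for your argument, since the transition maps of $\mathbb{R}^{n}/L$ are already translations by $L$ and your periodicity step never uses orthogonality (the paper likewise recalls Gram--Schmidt before the proposition but does not use it in the proof); and for part~(2) it is worth saying explicitly that $\cX$ means the trivializing field of Proposition~\ref{SolGen} (or Example~\ref{eg1}), so that ``universal differential polynomial in $u=\cP^{12}$'' is literally what is being quoted.
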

\begin{proof}
The coefficients $\cP^{ij}$ of $\cP$ are L-periodic Fourier series. Products of derivatives of these Fourier series yield new products of trigonometric functions with certain wavenumbers. The expressions for $\dot{\cP}$, $\cQ_{1:6}$ and $\cX$ are all of this form. These products are reduced by trigonometric substitutions, resulting in Fourier series of higher wavenumbers; The wavenumbers of these new expression are no smaller than the original wavenumbers. Therefore, the coefficients of $\dot{\cP}, \cQ_{1:6}$ and $\cX$ are also L-periodic Fourier series.
We conclude that the image under the projection $\pi$ of the flows $\dot{\cP}$ and $\cQ_{1:6}$ and the vector field $\cX$ are well-defined on $M^{n}=\mathbb{R}^{n}/L$. 
\end{proof}
\begin{example}\label{ExL}
Let us consider a flow defined on the 2-torus $\mathbb{T}^{2}$ with periods $\{1,1\}$. We inspect the following 2-dimensional bi-vector $\cP$, well-defined, on that torus:
\begin{align*}
\cP^{12}(x,y)=u(x,y)= \alpha_{1,1}\sin(2\pi x)\cos(2\pi y) ,\ \alpha_{1,1}\in \mathbb{R}.
\end{align*}
We can compute the corresponding flow using formula~\eqref{Flow2D}:
\begin{align*}
\cQ_{1:0}^{12}=&-128\alpha_{1,1}^{4}\pi^{6}\cos(2\pi x)\cos(2\pi y)\sin^{3}(2\pi x)\sin^{3}(2\pi y)\\
&+128\alpha_{1,1}^{4}\pi^{6}\sin(2\pi x)\sin(2\pi y)\cos^{3}(2\pi x)\cos^{3}(2\pi y).
\end{align*}
This expression contains products of sines and cosines, which we reduce using trigonometric substitutions to obtain:
\begin{align*}
\cQ_{1:0}^{12}&=16\alpha_{1,1}^{4}\pi^{6}\left(\sin(8\pi x)\sin(4\pi y)+\sin(4\pi x)\sin(8\pi y)\right),
\end{align*}
Which is again well-defined on our torus.
From a direct computation and trigonometric substitutions, it follows that:
\begin{align*}
H=-16\alpha_{1,1}^{3}{\pi}^{4}\sin(2\pi x)\cos(2\pi y)\left(\sin^{2}(2\pi x)\sin^{2}(2\pi y)\right.\\
+2\cos^{2}(2\pi x)\sin^{2}(2\pi y)
+\cos^{2}(2\pi y)\cos^{2}(2\pi x)\left. \right).
\end{align*}
The vector field $\cX$ is obtained through $F=H_{y}$ and $G=-H_{x}$. The obtained expressions for $F$ and $G$ are again reduced, by trigonometric substitutions, yielding:
\begin{align*}
F&=\phantom{-}8\alpha_{1,1}^{3}\pi^{5}\left(-2\cos(2\pi x)\cos(2\pi y)-3\cos(6\pi x)\cos(2\pi y)+\cos(2\pi x)\cos(6\pi y)  \right),
\end{align*}
\begin{align*}
G&=-8\alpha_{1,1}^{3}\pi^{5}\left(\phantom{-}2\sin(2\pi x)\sin(2\pi y)-3\sin(2\pi x)\sin(6\pi y)+\sin(6\pi x)\sin(2\pi y)\right).
\end{align*}
We now verify that this vector field $\cX$ on $\mathbb{R}^{2}$ obtained from the first graph in the above Lemma is well-defined on $\mathbb{T}^{2}$.
\end{example}
\subsubsection*{Conclusion}
When restricting to $2$-dimensional affine manifolds, only one of the two graphs in the Kontsevich tetrahedral flow has a non-zero contribution. The cocycle condition holds trivially and we proved that the flow is a coboundary by constructing a trivializing vector field. We also showed how the divergence free part of the trivializing vector field is realizable in terms of Kontsevich graphs. Finally, we explained why both the flow $\dot{\cP}$ and vector field $\cX$ remain well-defined when taking a quotient over a lattice with respect to which the original Poisson bi-vector was periodic.

\small
\subsubsection*{Acknowledgements}
The author thanks R.\,Buring for cooperation and A.\,V.\,Kiselev for gui\-dan\-ce and constructive criticism. 
The author is grateful to the organizers of SDSP~III conference (3--7 August 2015, CVUT D\v{e}\v{c}\'{\i}n, Czech Republic) for stimulating discussions. This research was supported in part by the Graduate School of Science (RuG).
\normalsize

\appendix
\section{Vanishing of $\Gamma_{2}$ in 2D}\label{App}
\noindent
To show that $\frac{1}{2}\left(\Gamma_{2}^{12}(\cP)-\Gamma_{2}^{21}(\cP)\right)=0$ holds in the $2$-dimensional case, we expand the sum in the formula of $\Gamma_{2}^{ij}$ for $i,j,k,\ell,m,k',\ell',m'\in \{1,2\}$:\\
\tiny
\hspace{-10mm}
\begin{minipage}{0.5\textwidth}
\begin{align*}
\Gamma_{2}^{12}(\cP) &= \sum\limits_{j,k,\ell,k',\ell',m'=1}^{2} \cP^{1j}_{k\ell} \cP^{k2}_{k'\ell'} \cP^{k'\ell}_{m'} \cP^{m'\ell'}_{j}\\
&= \sum\limits_{\ell,k',\ell',m'=1}^{2} \cP^{12}_{1\ell} \cP^{12}_{k'\ell'} \cP^{k'\ell}_{m'} \cP^{m'\ell'}_{2}\\
&= \sum\limits_{k',\ell',m'=1}^{2} \cP^{12}_{11} \cP^{12}_{k'\ell'} \cP^{k'1}_{m'} \cP^{m'\ell'}_{2}+\cP^{12}_{12} \cP^{12}_{k'\ell'} \cP^{k'2}_{m'} \cP^{m'\ell'}_{2}\\
&= \sum\limits_{\ell',m'=1}^{2} \cP^{12}_{11} \cP^{12}_{2\ell'} \cP^{21}_{m'} \cP^{m'\ell'}_{2}+\cP^{12}_{12} \cP^{12}_{1\ell'} \cP^{12}_{m'} \cP^{m'\ell'}_{2}\\
&= \cP^{12}_{11} \cP^{12}_{22} \cP^{21}_{1} \cP^{12}_{2}+\cP^{12}_{11} \cP^{12}_{21} \cP^{21}_{2} \cP^{21}_{2}\\
&\phantom{=}\ +\cP^{12}_{12} \cP^{12}_{12} \cP^{12}_{1} \cP^{12}_{2}+\cP^{12}_{12} \cP^{12}_{11} \cP^{12}_{2} \cP^{21}_{2}\\
&= -u_{xx}u_{yy}u_{x}u_{y}+u_{xx}u_{xy}u_{y}^{2}+u_{xy}^{2}u_{x}u_{y}-u_{xy}u_{xx}u_{y}^{2}\\
&= -u_{xx}u_{yy}u_{x}u_{y}+u_{xy}^{2}u_{x}u_{y},
\end{align*}
\end{minipage}
\hspace{5mm}
\begin{minipage}{0.5\textwidth}
\begin{align*}
\Gamma_{2}^{21}(\cP) &= \sum\limits_{j,k,\ell,k',\ell',m'=1}^{2} \cP^{2j}_{k\ell} \cP^{k1}_{k'\ell'} \cP^{k'\ell}_{m'} \cP^{m'\ell'}_{j}\\
&= \sum\limits_{\ell,k',\ell',m'=1}^{2} \cP^{21}_{2\ell} \cP^{21}_{k'\ell'} \cP^{k'\ell}_{m'} \cP^{m'\ell'}_{1}\\
&= \sum\limits_{k',\ell',m'=1}^{2} \cP^{21}_{21} \cP^{21}_{k'\ell'} \cP^{k'1}_{m'} \cP^{m'\ell'}_{1}+\cP^{21}_{22} \cP^{21}_{k'\ell'} \cP^{k'2}_{m'} \cP^{m'\ell'}_{1}\\
&= \sum\limits_{\ell',m'=1}^{2} \cP^{21}_{21} \cP^{21}_{2\ell'} \cP^{21}_{m'} \cP^{m'\ell'}_{1}+\cP^{21}_{22} \cP^{21}_{1\ell'} \cP^{12}_{m'} \cP^{m'\ell'}_{1}\\
&= \cP^{21}_{21} \cP^{21}_{22} \cP^{21}_{1} \cP^{12}_{1}+\cP^{21}_{21} \cP^{21}_{21} \cP^{21}_{2} \cP^{21}_{1}\\
&\phantom{=}\ +\cP^{21}_{22} \cP^{21}_{12} \cP^{12}_{1} \cP^{12}_{1}+\cP^{21}_{22} \cP^{21}_{11} \cP^{12}_{2} \cP^{21}_{1}\\
&=-u_{xy}u_{yy}u_{x}^{2}+u_{xy}^{2}u_{x}u_{y}+u_{xy}u_{yy}u_{x}^{2}-u_{xx}u_{yy}u_{x}u_{y}\\
&= -u_{xx}u_{yy}u_{x}u_{y}+u_{xy}^{2}u_{x}u_{y}.
\end{align*}
\end{minipage}
\normalsize
\end{document}